\newcommand{\be}{\begin{equation}}
\newcommand{\ee}{\end{equation}}
\newcommand{\beq}{\begin{equation}}
\newcommand{\eeq}{\end{equation}}
\newcommand{\bea}{\begin{eqnarray}}
\newcommand{\eea}{\end{eqnarray}}
\def\beqa{\begin{eqnarray}}
\def\eeqa{\end{eqnarray}}
\def\KK{{\mathbb K}}
\def\CC{{\mathbb C}}
\def\RR{{\mathbb R}}
\newcommand{\eq}{\begin{equation}}
\newcommand{\eqa}{\begin{eqnarray}}
\newcommand{\en}{\end{equation}}
\newcommand{\ena}{\end{eqnarray}}
\newtheorem{theorem}{Theorem}[section]
\newtheorem{corollary}[theorem]{Corollary}
\newtheorem{definition}[theorem]{Definition}
\newtheorem{lemma}[theorem]{Lemma}
\newtheorem{proposition}[theorem]{Proposition}
\newtheorem{remark}[theorem]{Remark}
\def\I{\mathcal{I}}
\def\KK{{\mathbb K}}
\def\CC{{\mathbb C}}
\def\RR{{\mathbb R}}
\def\beqa{\begin{eqnarray}}
\def\eeqa{\end{eqnarray}}
\author[G. Duchamp, N. Hoang-Nghia, T. Krajewski  and A. Tanasa]
{G. Duchamp\addressmark{1}
\thanks{Partially supported by
    a Univ. Paris 13 Sorbonne Paris Cit\'e BQR grant}, 
N. Hoang-Nghia\addressmark{1}
\thanks{Partially supported by
    a Univ. Paris 13 Sorbonne Paris Cit\'e BQR grant}, 
T. Krajewski\addressmark{2}\thanks{Partially supported by
    a Univ. Paris 13 Sorbonne Paris Cit\'e BQR grant} \and  
  A. Tanasa\addressmark{1,3}\thanks{Partially supported by
    a Univ. Paris 13 Sorbonne Paris Cit\'e BQR grant and by grants PN 09 37 01
02 and CNCSIS Tinere Echipe 77/04.08.2010}}
\title[Renormalization group-like proof of the universality of the Tutte polynomial for matroids]
{Renormalization group-like proof of the universality of the Tutte polynomial for matroids}
\address{\addressmark{1}
LIPN, UMR 7030 CNRS, Institut Galil\'ee - 
Universit\'e Paris 13, Sorbonne Paris Cit\'e, 
99 av. J.-B. Cl\'ement, 93430 Villetaneuse, France\\
\addressmark{2}
CPT, 
CNRS UMR 7332, Aix Marseille Universit\'e, 
Campus de Luminy, Case 907, 13288 Marseille cedex 9, France\\
\addressmark{3}
Horia Hulubei National Institute for Physics and Nuclear Engineering,
P.O.B. MG-6, 077125 Magurele, Romania
\\
}
\keywords{
Tutte polynomial for matroids, Hopf algebras for matroids, Hopf algebra characters, 
matroid recipe theorem, 
Combinatorial Physics}
\begin{document}
\maketitle
\begin{abstract}
\paragraph{Abstract}
In this paper we give a new proof of the universality of the Tutte polynomial
 for matroids. This proof uses appropriate characters of Hopf algebra of matroids, 
algebra introduced by Schmitt (1994). We show that these 
Hopf algebra characters are solutions of 
some differential equations which are of the same type as the differential equations 
used to describe the renormalization group flow in quantum field theory.
 This approach allows us to also prove, in a different way,
a matroid Tutte polynomial convolution formula published by Kook, Reiner and Stanton (1999). 
This FPSAC contribution is an extended abstract.

\paragraph{R\'esum\'e.}
Dans cet article, nous donnons une nouvelle preuve de l'universalit\'e du polyn\^ome 
de Tutte
  pour les matro\"ides. Cette preuve utilise des caract\`eres 
appropri\'es de l'alg\`ebre de Hopf des matro\"ides
introduite par Schmitt (1994). Nous montrons que ces
caract\`eres alg\`ebre de Hopf sont des solutions de
des \'equations diff\'erentielles du m\^eme 
type que les \'equations diff\'erentielles
utilis\'ees pour d\'ecrire le flux du 
groupe de renormalisation en th\'eorie quantique de champs. 
Cette approche nous permet aussi de d\'emontrer, d'une mani\`ere diff\'erente, 
une formule de convolution du polyn\^ome de Tutte des matro\"ides, 
formule publi\'ee par Kook, Reiner et Stanton (1999).
Cette contribution FPSAC est un r\'esum\'e \'etendu. 
\end{abstract}

\section{Introduction}
\label{sec:in}

The interplay between algebraic combinatorics and quantum field theory (QFT) has become 
more and more present within the spectrum of Combinatorial Physics 
(spectrum represented by many other subjects, such as
 the combinatorics of quantum mechanics, of statistical physics 
or of integrable systems - see, for example,
 \cite{bf}, \cite{5}, the review article \cite{dflast}, the 
Habilitation \cite{io-hdr} and references within).

One of the most known results lying at this frontier between 
algebraic combinatorics and QFT is the celebrated 
Hopf algebra \cite{ck}, 
describing the combinatorics of renormalization in QFT.
It is worth emphasizing that the coproduct of this 
type of Hopf algebra is based on a selection/contraction rule
 (one has on the coproduct left hand side (lhs) some selection of
 a subpart of the entity the coproduct acts on, while
 on the coproduct right hand side (rhs) one has the result of the
{\it contraction} of the selected subpart).
This type of rule 
(appearing in other situations in Mathematical Physics, 
see also \cite{fab, io-dirk, mar, io-sf}) 
is manifestly distinct from the 
selection/deletion one, largely studied in 
algebraic combinatorics (see, for example, 
\cite{thi1} and references within).

In this paper, we use characters of the matroid Hopf algebra introduced in \cite{s}
to prove the universality property of the Tutte polynomial for matroids.
We use a Combinatorial Physics approach, namely we use a renormalization group-like 
differential equation to prove the respective recipe theorem. Our method also allows 
to give a new proof of a matroid Tutte polynomial convolution formula 
given in \cite{reiner}. This approach generalizes the one given in \cite{km} for 
the universality of the Tutte polynomial for graphs.
Moreover, the demonstrations we give here allow us to also have proofs
of  the graph results conjectured  
in  \cite{km}.

\medskip

The paper is structured as follows. In the following section we briefly present the 
renormalization group flow equation and show that equations of such type have already been 
successfully used in combinatorics. The third section defines matroids, as well the Tutte polynomial 
for matroids and the matroid Hopf algebra defined in \cite{s}.
The following section defines some particular infinitesimal characters of this Hopf algebra
 as well as their exponential - proven, later on, to be non-trivially 
related to the Tutte polynomial for matroids. The fifth section uses all this tools to 
give a new proof of the matroid Tutte polynomial convolution formula 
given in \cite{reiner} and of the recipe theorem for the Tutte polynomial of matroids.

\section{Renormalization group in quantum field theory - a glimpse}
\renewcommand{\theequation}{\thesection.\arabic{equation}}
\setcounter{equation}{0}
\label{sec:RG}

A QFT model (for a general introduction 
to QFT and not just an introduction, see for example the book \cite{book-ZJ}) 
is defined by means of a functional integral of the exponential of an 
{\it action} $S$ which, from a mathematical point of view, is a functional
 of the {\it fields} of the model. For the $\Phi^4$ scalar model - the simplest QFT model - 
there is only one type of
field, which we denote by $\Phi (x)$. From a mathematical point of view,
for an Euclidean QFT scalar model,
the field $\Phi(x)$ is a function, $\Phi:\RR^D\to \KK$, 
where $D$ is usually taken equal to $4$ (the dimension of the space) 
and $\KK\in\{\RR,\CC\}$ (real, respectively complex fields).

The quantities computed in QFT are generally divergent. One thus has to consider a 
real, positive, {\it cut-off}
 $\Lambda$ - the flowing parameter.
 This leads to a family of cut-off dependent actions, family denoted by
$S_\Lambda$. The derivation $\Lambda\frac{\partial S_\Lambda}{\partial\Lambda}$ gives the 
{\it renormalization group equation}.

The quadratic part of the action - the {\it propagator} of the model - can be written in the following 
way
\beqa
\label{propa}
C_{\Lambda,\Lambda_0}(p,q)=\delta(p-q)\int_{\frac{1}{\Lambda_0}}^{\frac{1}{\Lambda}}
d\alpha e^{-\alpha p^2},
\eeqa
with $p$ and $q$ living in the Fourier transformed space $\RR^D$ and 
$\Lambda_0$ a second real, positive cut-off. In perturbative QFT, one has to 
consider {\it Feynman graphs}, and to associate to each such a graph a 
{\it Feynman integral}  
(further related to quantities actually measured in physical experiments).
The contribution of an edge of such a Feynman graph to its associated Feynman integral 
is given by an integral such as \eqref{propa}.

One can then get (see \cite{pol}) 
the Polchinski flow equation
\beqa
\label{eq:pol}
\Lambda \frac{\partial S_\Lambda}{\partial\Lambda}=
\int_{\RR^{2D}} \frac 12 d^D p d^D q
\Lambda\frac{\partial C_{\Lambda,\Lambda_0}}{\partial \Lambda}
\left(
\frac{\delta^2S}{\delta \tilde \Phi (p)\delta \tilde \Phi (q)}
-\frac{\delta S}{\delta \tilde \Phi (p)} \frac{\delta S}{\delta \tilde \Phi (q)}
\right),
\eeqa
where $\tilde \Phi$ represents the Fourier transform of the function $\Phi$. 
The first term in the right hand side (rhs) of the equation above corresponds 
to the derivation of a propagator associated to a bridge 
in the respective Feynman graph.
The second term corresponds to an edge which is not a bridge and is 
part of some circuit in the graph. One can see this diagrammatically in Fig. \ref{fig:pol}.
\begin{figure}[h]
\centerline{\includegraphics[width=12cm]{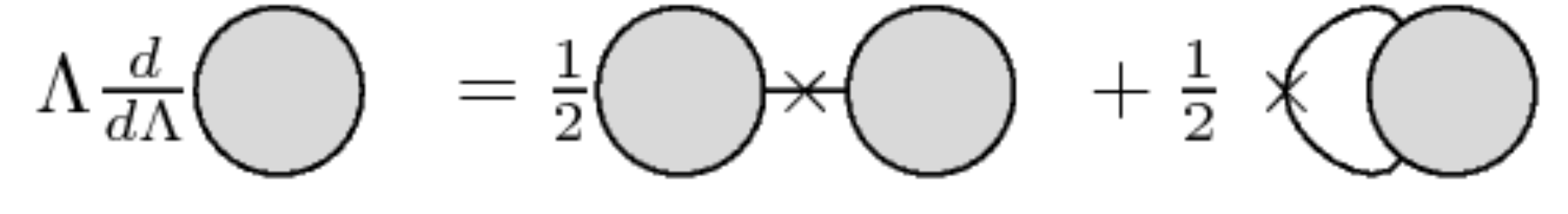}   }
\caption{Diagrammatic representation of the flow equation.}
\label{fig:pol}
\end{figure}

This equation can then be used to prove perturbative renormalizability in QFT.
Let us also stress here, that an equation of this type is also used 
to prove a result of E. M. Wright which expresses the generating function 
of connected graphs under certain conditions (fixed excess). 
To get this generating 
functional (see, for example, Proposition $II.6$ the book \cite{book-fs}), 
one needs to consider contributions of two types of edges (first contribution
when  the 
edge is a bridge and a second one when not - see again Fig. \ref{fig:pol}).

As already announced in the Introduction, we will use such an equation to prove the 
universality of the matroid Tutte polynomial (see section \ref{sec:proof}).

\section{Matroids: the Tutte polynomial and the Hopf algebra}
\renewcommand{\theequation}{\thesection.\arabic{equation}}
\setcounter{equation}{0}
\label{sec:hopfm}

In this section we recall the definition and some properties of the
 Tutte polynomial for matroids as well as of the matroid Hopf algebra 
defined in \cite{s}.

Following the book \cite{oxley}, one has the following definitions:

\begin{definition}
A {\bf matroid} M is a pair $(E, \mathcal{I})$ consisting of a finite set E 
and a collection of subsets of E satisfying:
\begin{itemize}
\item[(I1)]$\mathcal{I}$ is non-empty. 
\item[(I2)] Every subset of every member of $\mathcal{I}$ is also in $\mathcal{I}$.
\item[(I3)] If $X$ and $Y$ are in $\mathcal{I}$ and $|X| = |Y | + 1$, then there is an element 
$x$ in $X - Y$ such that $Y \cup \{x\}$ is in $\mathcal{I}$.
\end{itemize}
The set $E$ is the ground set of the matroid and the members of 
$\mathcal{I}$ are the independent sets of the matroid.
\end{definition}

One can associate matroid to graphs - graphic matroids. Nevertheless, not all matroids 
are graphic matroids.

Let $E$ be an $n-$element set and let $\I$ be the collection of subsets 
of $E$ with at most $r$ elements, $0\le r\le n$. One can check that $(E,\I)$ 
is a matroid; it is called the {\bf uniform matroid} $U_{r,n}$.

\begin{remark}
\label{remarca}
 If one takes $n=1$, there are only two matroids, namely $U_{0,1}$ and $U_{1,1}$ and both of these 
matroids are graphic matroids. The graphs these two 
matroids correspond to are the graphs with one edge of Fig. \ref{graf1} and Fig. \ref{graf2}.
\begin{figure}[h]
\centerline{\includegraphics[width=1cm]{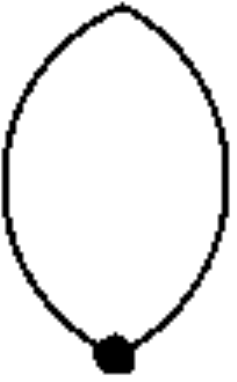}   }
\caption{The graph corresponding to the matroid $U_{0,1}$.}
\label{graf1}
\end{figure}
\begin{figure}[h]
\centerline{\includegraphics[width=2cm]{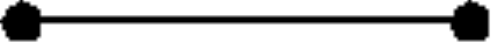}   }
\caption{The graph corresponding to the matroid $U_{1,1}$.}
\label{graf2}
\end{figure}
In the first case, the edge is a loop (in graph theoretical terminology) or a tadpole 
(in QFT language). In the second case, the edge represents a bridge (in graph theoretical 
terminology) or a $1$-particle-reducible line (in QFT terminology) - the number of connected 
components of the graphs increases by $1$ if one deletes the respective edge. In matroid 
terminology, these two particular cases correspond 
to a loop and respectively to a coloop (see Definitions 
\ref{def-loop} and respectively \ref{def-coloop} below).
\end{remark}

\begin{definition}
 The collection of maximal independent sets of a matroid are called bases. 
The collection of minimal dependent sets of a matroid are called circuits.
\end{definition}

Let $M=(E,\I)$ be a matroid and let $\cal B=\{ B\}$ be the collection of bases of $M$. 
Let ${\cal B}^\star = \{E - B: B \in {\cal B} \}$. 
Then ${\cal B}^\star$ is the collection of bases of a matroid $M^\star$ on E. 
The matroid $M^*$ is called the dual of $M$.

\begin{definition}
\label{def-rank}
Let $M=(E,\I)$ be a matroid. 
The {\bf rank} $r(A)$ of $A \subset E$ is defined as the cardinal of a maximal independent set in $A$.
\begin{equation}\label{eq:rankfunc}
r(A) = max\{|B| \mbox{  s.t.  } B \in \I, B \subset A\}\ .
\end{equation}
\end{definition}

\begin{definition}
\label{def-nullity}
Let $M=(E,\mathcal{I})$ be a matroid with a ground set $E$. 
The {\bf nullity} function is given by  
\begin{equation}
\label{defnul}
n(M) = |E|- r(M)\ .
\end{equation}
\end{definition}

\begin{definition}
\label{def-loop}
Let $M=(E,\I)$ be a matroid.
The element $e \in E$. 
 is a \textbf{loop} iff $\{e\}$ is the circuit.
\end{definition}

\begin{definition}
\label{def-coloop}
Let $M=(E,\I)$ be a matroid. 
The element $e\in E$ is a \textbf{coloop} iff, for any basis $B$, 
$e\in B$ .
\end{definition}

Let us now define two basic operations on matroids.
Let $M$ be a matroid $(E,\I$) and $T$ be a subset of $E$. 
Let $\I'=\{I\subseteq E-T: I \in \I\}$. 
One can check that $(E-T,\I')$ is a matroid. 
We denote this matroid by $M\backslash T$ - the {\bf deletion} of $T$ from $M$.
The {\bf contraction} of $T$ from $M$, $M/T$, is given by the formula: 
$M/T=(M^\star\backslash T)^\star$.

Let us also recall the following results:

\begin{lemma}\label{lm:res-del}
Let $M$ be a matroid $(E,\I$) and $T$ be a subset of $E$. One has \begin{equation}
M|_T = M\backslash_{E-T} \ .
\end{equation}
\end{lemma}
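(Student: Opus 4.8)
The plan is to prove the identity directly from the definitions of the two operations, since the asserted equality is essentially just a reformulation of the definition of deletion. First I would recall that for a subset $S \subseteq E$ the \emph{restriction} of $M = (E,\I)$ to $S$ is by definition the pair
\[
M|_S = \bigl(S,\ \{I \subseteq S : I \in \I\}\bigr),
\]
for which the matroid axioms (I1)--(I3) are verified in exactly the same way as for the deletion discussed just above the statement; in particular $M|_S$ is a matroid.

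Next I would apply the definition of deletion given in the text, but with the subset $E - T$ playing the role of $T$. By that definition,
\[
M \backslash (E-T) = \bigl(E - (E-T),\ \I'\bigr), \qquad \I' = \{\, I \subseteq E-(E-T) : I \in \I \,\}.
\]
Since $T \subseteq E$, one has the set-theoretic identity $E - (E-T) = T$. Substituting it into the displayed line, the ground set of $M \backslash_{E-T}$ is $T$ and its collection of independent sets is $\{\, I \subseteq T : I \in \I \,\}$, which coincides term by term with the ground set and the independent sets of $M|_T$. Hence $M|_T = M \backslash_{E-T}$.

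I do not expect any genuine obstacle: the argument is an unravelling of notation, and the single step that needs a word of justification is the identity $E - (E-T) = T$, which holds precisely because $T$ is a subset of $E$ (the hypothesis of the lemma). If the reader's preferred reference takes $M|_T$ rather than $M\backslash(E-T)$ as primitive, the same computation reads in the opposite direction and yields the same conclusion.
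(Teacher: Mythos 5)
Your proof is correct: the paper states this lemma without proof (it is merely recalled as a standard fact from the matroid literature), and your argument is precisely the definitional unwinding one would supply --- the deletion $M\backslash(E-T)$ has ground set $E-(E-T)=T$ (using $T\subseteq E$) and independent sets $\{I\subseteq T : I\in\I\}$, which is exactly the restriction $M|_T$. Your one explicit addition, spelling out the definition of restriction (which the paper uses in its coproduct but never formally defines), is appropriate and closes the only gap a careful reader might notice.
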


\begin{lemma}
\label{lm:coloop}
If $e$ is a coloop of a matroid $M=(E,\I)$, then $M/e = M\backslash e$.
\end{lemma}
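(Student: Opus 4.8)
The plan is to show that $M/e$ and $M\backslash e$ are matroids on the same ground set $E\setminus\{e\}$ carrying the same collection of bases; since a matroid is determined by its bases (its independent sets being exactly the subsets of its bases), this forces $M/e=M\backslash e$. Along the way I use the equicardinality of bases, which follows from axiom (I3).

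First I would record a reformulation of the coloop condition: $e$ is a coloop of $M$ if and only if $I\cup\{e\}\in\mathcal{I}$ whenever $I\in\mathcal{I}$ and $e\notin I$. Indeed, if $e$ is a coloop and $e\notin I\in\mathcal{I}$, then extending $I$ to a basis $B$ we have $e\in B$, hence $I\cup\{e\}\subseteq B$ and axiom (I2) gives $I\cup\{e\}\in\mathcal{I}$; conversely, if this closure property holds then no basis can omit $e$, as such a basis could be enlarged by $e$. Dually, this says precisely that $e$ is a loop of $M^{\star}$: $e$ lies in every basis of $M$ iff $e$ lies in no complementary set $E\setminus B$, iff $e$ lies in no independent set of $M^{\star}$, iff $\{e\}$ is a circuit of $M^{\star}$.

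Next I would identify the bases of $M\backslash e$, whose independent sets are $\{I\in\mathcal{I}:e\notin I\}$. Using the reformulation, a set $B'\subseteq E\setminus\{e\}$ is a basis of $M\backslash e$ if and only if $B'\cup\{e\}$ is a basis of $M$: if $B'$ is maximal independent in $E\setminus\{e\}$ then $B'\cup\{e\}\in\mathcal{I}$, and when we extend it to a basis $B$ of $M$ we must have $B=B'\cup\{e\}$, since otherwise $B\setminus\{e\}$ would be an independent subset of $E\setminus\{e\}$ properly containing $B'$, contradicting its maximality; conversely, if $B$ is a basis of $M$ then $e\in B$ and $B\setminus\{e\}$ is independent in $M\backslash e$, and it is maximal there because any independent $C\subseteq E\setminus\{e\}$ properly containing $B\setminus\{e\}$ would yield $C\cup\{e\}\in\mathcal{I}$ with $|C\cup\{e\}|>|B|=r(M)$. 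Hence the bases of $M\backslash e$ are exactly $\{\,B\setminus\{e\}:B\text{ a basis of }M\,\}$. Now consider $M/e=(M^{\star}\backslash e)^{\star}$: since $e$ is a loop of $M^{\star}$, every member of $\mathcal{I}(M^{\star})$ already avoids $e$, so deleting $e$ from $M^{\star}$ changes only the ground set, not the independent sets, and therefore the bases of $M^{\star}\backslash e$ are the bases of $M^{\star}$, namely $\{\,E\setminus B:B\text{ a basis of }M\,\}$, each of which is contained in $E\setminus\{e\}$ because $e\in B$. Dualizing over the ground set $E\setminus\{e\}$, the bases of $M/e$ are $\{\,(E\setminus\{e\})\setminus(E\setminus B):B\text{ a basis of }M\,\}=\{\,B\setminus\{e\}:B\text{ a basis of }M\,\}$, exactly the collection found above for $M\backslash e$; since the two matroids also share the ground set $E\setminus\{e\}$, we conclude $M/e=M\backslash e$.

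The delicate point is the bookkeeping in the last step: one must check that deleting the loop $e$ from $M^{\star}$ is harmless at the level of independent sets and bases, and that the two applications of $(\cdot)^{\star}$---first forming $M^{\star}$, then dualizing $M^{\star}\backslash e$---produce complements taken relative to $E\setminus\{e\}$ rather than relative to $E$. Everything else is routine from the independence axioms and the given description of the dual matroid.
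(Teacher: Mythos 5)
The paper does not prove this lemma at all: it is one of the facts "recalled" from the literature (Oxley's book), so there is no in-paper argument to compare yours against. Your proof is correct and, usefully, it is phrased entirely in terms of the paper's own definitions — coloop as "element of every basis" and contraction as $M/e=(M^{\star}\backslash e)^{\star}$ — rather than via rank functions or circuits. The key steps all check out: the closure reformulation of the coloop condition (extend $I$ to a basis, which must contain $e$, then use (I2)); the identification of the bases of $M\backslash e$ with $\{B\setminus\{e\}:B\text{ a basis of }M\}$, where the maximality argument correctly invokes equicardinality of bases (a standard consequence of (I3)); the observation that $e$ is a loop of $M^{\star}$, so deleting it leaves the independent sets of $M^{\star}$ untouched and only shrinks the ground set; and the final complementation $(E\setminus\{e\})\setminus(E\setminus B)=B\setminus\{e\}$, taken correctly relative to the smaller ground set. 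Since both $M/e$ and $M\backslash e$ live on $E\setminus\{e\}$ and a matroid is determined by its bases, equality follows. A textbook proof would more commonly run through the rank function (showing $r_{M/e}=r_{M\backslash e}$ via $r(A\cup\{e\})=r(A)+1$ for a coloop), which is marginally shorter; your basis-level argument costs a little more bookkeeping but has the advantage of using exactly the duality-based definition of contraction adopted in the paper, with no auxiliary facts about ranks needed.
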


\begin{lemma}
\label{lm:rankcontra}
Let $M=(E,\I)$ be a matroid and $T\subseteq E$, then, for all $X \subseteq E-T$, \begin{equation}
r_{M/T}(X) = r_M(X\cup T) - r_M(T)\ .
\end{equation}
\end{lemma}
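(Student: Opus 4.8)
The plan is to reduce the statement to two ingredients: the fact that deletion does not affect the rank of subsets of the surviving ground set, and the rank formula for the dual matroid; once these are in place, the claimed identity follows from the definition $M/T=(M^*\backslash T)^*$ by a short computation. (Throughout I write $A-B$ for set difference.)

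First I would record the elementary observation that if $N=(S,\mathcal{I})$ is a matroid and $U\subseteq S$, then $r_{N\backslash U}(X)=r_N(X)$ for every $X\subseteq S-U$, since the independent subsets of $X$ are the same whether computed in $N$ or in $N\backslash U$ (being contained in $X\subseteq S-U$, they automatically avoid $U$). Next I would establish the \emph{dual rank formula}
\begin{equation}
\label{eq:dualrank}
r_{N^*}(A)=|A|-r_N(S)+r_N(S-A)\qquad (A\subseteq S).
\end{equation}
This comes straight from the definition of $N^*$ via bases: a subset $J\subseteq A$ is independent in $N^*$ iff it is disjoint from some basis of $N$, i.e.\ iff $S-J$ contains a basis of $N$, i.e.\ iff $r_N(S-J)=r_N(S)$; hence $r_{N^*}(A)=|A|-\min\{|A\cap B| : B\text{ a basis of }N\}$. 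One then checks that $\min_B|A\cap B|=r_N(S)-r_N(S-A)$: for any basis $B$ the set $B-A$ is independent and contained in $S-A$, so $|A\cap B|=|B|-|B-A|\ge r_N(S)-r_N(S-A)$, while equality is attained by using the augmentation axiom (I3) to extend a basis of $N|_{S-A}$ to a basis $B$ of $N$, for which $B\cap(S-A)$ is then forced to be exactly that basis of $N|_{S-A}$.

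With these in hand, put $S=E-T$, the common ground set of $M/T$ and of $M^*\backslash T$, and fix $X\subseteq S$. Applying the definition of contraction, then \eqref{eq:dualrank} to $N=M^*\backslash T$, then the deletion observation twice, and finally \eqref{eq:dualrank} to $N=M$, and using $E-S=T$ and $E-(S-X)=T\cup X$ (valid because $X\subseteq E-T$):
\begin{align*}
r_{M/T}(X)
&=r_{(M^*\backslash T)^*}(X)
 =|X|-r_{M^*\backslash T}(S)+r_{M^*\backslash T}(S-X)\\
&=|X|-r_{M^*}(S)+r_{M^*}(S-X)\\
&=|X|-\bigl(|S|-r_M(E)+r_M(T)\bigr)+\bigl(|S-X|-r_M(E)+r_M(T\cup X)\bigr).
\end{align*}
Since $|X|+|S-X|=|S|$, the terms $|S|$ and $r_M(E)$ cancel, leaving $r_{M/T}(X)=r_M(X\cup T)-r_M(T)$, as claimed.

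The only step that is not pure bookkeeping is the dual rank formula \eqref{eq:dualrank}, and that is where I expect the real work to sit — specifically in the augmentation argument pinning down $\min_B|A\cap B|$. If one prefers to avoid duality, an alternative is to prove directly from $M/T=(M^*\backslash T)^*$ that $I\subseteq E-T$ is independent in $M/T$ exactly when $I\cup B_T$ is independent in $M$ for a fixed basis $B_T$ of $M|_T$, and then take a maximal independent subset of $X\cup T$ in $M$ containing $B_T$ (again supplied by (I3)); but isolating that independence criterion is of comparable difficulty, so I would favour the computation above.
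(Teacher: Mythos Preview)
Your proof is correct. Note, however, that the paper does not actually prove this lemma: it is listed among a group of standard facts that are simply ``recalled'' (with implicit reference to Oxley's textbook), so there is no proof in the paper to compare against. Your derivation via the dual rank formula and the definition $M/T=(M^*\backslash T)^*$ is one of the standard textbook routes to this identity, and the computation and the justification of \eqref{eq:dualrank} are both sound.
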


Let us now define the Tutte polynomial for matroids:

\begin{definition}
Let $M=(E,\mathcal{I})$ be a matroid. The {\bf Tutte polynomial} is given by the following formula:
\begin{equation}
\label{def-tutte}
T_M(x,y) = \sum_{A \subseteq E} (x-1)^{r(E) - r(A)} (y-1)^{n(A)}.
\end{equation}
The sum is computed over all subset of the matroid's ground set. 
\end{definition}

\begin{definition}
Let $\psi$ be the matroid duality map,
that is a map associating to any matroid $M$ its dual,
$\psi(M) = M^\star$. 
\end{definition}

It is worth stressing here that one can define the dual of any matroid; this is not the case for graphs, 
where only the dual of planar graph can be defined.

Let us recall, from \cite{bo} that
\beqa
\label{reltdual}
T_M (x,y)=T_{M^\star}(y,x).
\eeqa

In \cite{s}, 
as a particularization of a more general construction of incidence Hopf algebras, the following result 
was proved:

\begin{proposition}
\label{prop-cs}
If $\mathcal{M}$ is a minor-closed family of matroids 
then $k(\widetilde{\mathcal{M}})$ is a coalgebra, 
with coproduct $\Delta$ and counit $\epsilon$ 
determined by 
\beqa
\label{defc}
\Delta(M) = \sum_{A\subseteq E} M|A \otimes M/A
\eeqa
and $\epsilon(M) = \begin{cases} 1, \mbox{ if } E = \emptyset, \\ 0 \mbox{ otherwise ,} \end{cases}$ for all $M = (E,\mathcal{I}) \in \mathcal{M}$. If, furthermore, the family $\mathcal{M}$ is closed under formation of direct sums, then $k(\widetilde{\mathcal{M}})$ is a Hopf algebra, with product induced by direct sum.
\end{proposition}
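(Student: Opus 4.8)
The plan is to reduce every coalgebra and bialgebra axiom to a handful of identities between the rank functions of iterated minors, using the fact that a matroid on a fixed ground set is determined by its rank function (its independent sets being those $I$ with $r(I)=|I|$), together with Lemma~\ref{lm:rankcontra}. First one checks that $\Delta$ and $\epsilon$ are well defined on $\widetilde{\mathcal M}$: since $\mathcal M$ is minor-closed, $M|A$ and $M/A$ lie in $\mathcal M$ for every $A\subseteq E$, and a bijection realising $M\cong M'$ restricts, for each $A$, to bijections $A\to A'$ and $E-A\to E'-A'$ realising $M|A\cong M'|A'$ and $M/A\cong M'/A'$; hence the right-hand side of \eqref{defc} does not depend on the chosen representative. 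The minor identities needed are, for $B\subseteq A\subseteq A'\subseteq E$:
(i) $(M|A)|B=M|B$;
(ii) $M/A'=(M/A)/(A'-A)$;
(iii) $(M|A')/A=(M/A)|(A'-A)$.
Identity (i) follows at once from Lemma~\ref{lm:res-del} and the definition of deletion; (ii) and (iii) follow by comparing rank functions, since for $X\subseteq A'-A$ both $r_{(M|A')/A}(X)$ and $r_{(M/A)|(A'-A)}(X)$ equal $r_M(X\cup A)-r_M(A)$ by Lemma~\ref{lm:rankcontra}, and for $X\subseteq E-A'$ both sides of (ii) evaluate to $r_M(X\cup A')-r_M(A')$.

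For coassociativity I would expand the two iterated coproducts. One gets
\[
(\Delta\otimes\id)\Delta(M)=\sum_{A\subseteq E}\ \sum_{B\subseteq A}(M|A)|B\otimes(M|A)/B\otimes M/A ,
\]
which, after renaming $B\mapsto A$ and $A\mapsto A'$ and applying (i), equals $\sum_{A\subseteq A'\subseteq E}M|A\otimes(M|A')/A\otimes M/A'$. On the other hand
\[
(\id\otimes\Delta)\Delta(M)=\sum_{A\subseteq E}\ \sum_{C\subseteq E-A}M|A\otimes(M/A)|C\otimes(M/A)/C ,
\]
and setting $A'=A\cup C$ reindexes this sum over the pairs $A\subseteq A'\subseteq E$ with $C=A'-A$. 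The two expressions then agree term by term: the middle factors match by (iii) and the right factors by (ii). For the counit, $(\epsilon\otimes\id)\Delta(M)=\sum_{A\subseteq E}\epsilon(M|A)\,M/A$ reduces to the single term $A=\emptyset$, because $M|\emptyset$ is the matroid on the empty ground set while every other $M|A$ has nonempty ground set; since $M/\emptyset=M$ this gives $M$, and symmetrically $(\id\otimes\epsilon)\Delta(M)=M$ using $M|E=M$ and $\epsilon(M/E)=1$. Hence $k(\widetilde{\mathcal M})$ is a coalgebra.

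Assume now in addition that $\mathcal M$ is closed under direct sums, so that $M\cdot N:=M\oplus N$ is a well-defined internal operation on $k(\widetilde{\mathcal M})$. Associativity and commutativity of this product, and the fact that the class of the empty matroid is its unit, are inherited from the corresponding properties of $\oplus$. For compatibility with the coproduct one uses that every subset of $E\sqcup F$ is uniquely of the form $A\sqcup B$ with $A\subseteq E$, $B\subseteq F$, and that $(M\oplus N)|(A\sqcup B)=M|A\oplus N|B$ and $(M\oplus N)/(A\sqcup B)=M/A\oplus N/B$ (again verified on rank functions, the rank of a direct sum being the sum of the ranks); substituting into \eqref{defc} and distributing the double sum yields $\Delta(M\oplus N)=\Delta(M)\Delta(N)$, while $\epsilon(M\oplus N)=\epsilon(M)\epsilon(N)$ is clear. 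Thus $k(\widetilde{\mathcal M})$ is a bialgebra; it is graded by the cardinality of the ground set, with degree-zero part $k$ spanned by the empty matroid, hence connected, and a connected graded bialgebra carries a unique antipode defined by the standard recursion on the grading. The only genuine work here is the bookkeeping in identities (ii)--(iii) and their direct-sum counterparts; once these are in hand the coalgebra, bialgebra and Hopf-algebra axioms follow formally --- indeed the statement is the specialisation to matroids of Schmitt's general incidence-Hopf-algebra construction.
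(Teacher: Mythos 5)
Your proof is correct. Note, however, that the paper itself does not prove Proposition \ref{prop-cs} at all: it imports the statement from Schmitt's 1994 work, where it arises as a specialisation of a general construction of incidence Hopf algebras (the paper says exactly this just before the statement). So your argument is not a variant of the paper's proof but a self-contained replacement for the citation. What you do is the direct verification: well-definedness on isomorphism classes via minor-closedness, coassociativity reduced to the three minor identities $(M|A)|B=M|B$, $M/A'=(M/A)/(A'-A)$ and $(M|A')/A=(M/A)|(A'-A)$ (all correctly checked on rank functions using Lemma \ref{lm:rankcontra}), the counit axiom from the two extreme terms $A=\emptyset$ and $A=E$, compatibility of $\Delta$ with $\oplus$ from $(M\oplus N)|(A\sqcup B)=M|A\oplus N|B$ and $(M\oplus N)/(A\sqcup B)=M/A\oplus N/B$, and the antipode from connectedness of the grading by ground-set cardinality. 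This is exactly the right list of ingredients and each step is sound. The trade-off is the usual one: Schmitt's route gives the result uniformly for a whole family of incidence coalgebras (and explains conceptually why the selection/contraction coproduct is coassociative), while your computation is elementary, makes explicit which matroid minor identities carry the whole structure, and is arguably more useful to a reader who only cares about the matroid case used in this paper.
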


We refer to this Hopf algebra as the matroid Hopf algebra. We follow \cite{cs}
and, by a slight abuse of notation, we denote in the same way a matroid and its isomorphic class,
since the distinction will be clear from the context (as it is already in 
Proposition \ref{prop-cs}).

We denote the unit of this Hopf algebra by $\mathbf{1}$ (the empty matroid, or $U_{0,0}$).

\section{Characters of matroid Hopf algebra}
\renewcommand{\theequation}{\thesection.\arabic{equation}}
\setcounter{equation}{0}
\label{sec:ch}




Let us give the following definitions:

\begin{definition}
Let $f,g$ be two mappings in $Hom(\mathcal{M},\mathcal{M})$. 
The convolution product of $f$ and $g$ is given by the following formula
\begin{equation}
\label{def-conv}
f\ast g = m\circ (f\otimes g)\circ \Delta,
\end{equation}
where $m$ denotes the Hopf algebra multiplication, given here by direct sum (see above).
\end{definition}

\begin{definition}
A matroid Hopf algebra {\bf character} $f$ is an algebra morphism 
from 
the matroid Hopf algebra
into a fixed commutative ring $\mathbb{K}$, 
such that 
\beqa
f(M_1\oplus M_2) = f(M_1)f(M_2),\ \  f(\mathbf{1}) =1_\KK.
\eeqa
\end{definition}

\begin{definition}
A matroid Hopf algebra {\bf infinitesimal 
character} $g$ is an algebra morphism from 
the matroid Hopf algebra 
into a fixed commutative ring $\KK$, 
such that 
\beqa
g(M_1\oplus M_2) = f(M_1)\epsilon(M_2)+\epsilon(M_1)g(M_2).
\eeqa
\end{definition}

Since we work in a Hopf algebra where the non-trivial part of the 
coproduct is nilpotent, we can also define an exponential map by
the following expression 
\beqa
\label{def-expstar}
\mbox{exp}_\ast(\delta)=\epsilon+\delta+\frac 12 \delta\ast\delta +\ldots
\eeqa
where $\delta$ is an infinitesimal character.

As already stated above (see Remark \ref{remarca}),  
there are only two matroids with unit cardinal ground set,
$U_{0,1}$ and $U_{1,1}$. We now define two 
maps $\delta_{\mathrm{loop}}$ and $\delta_{\mathrm{coloop}}$.


\beqa
\label{def-dloop}
\delta_{\mathrm{loop}} (M) = \begin{cases}
1_\KK \mbox{ if } M 
= U_{0,1},\\
0_\KK \mbox{ otherwise }.
\end{cases}
\eeqa

\beqa
\label{def-dtree}
\delta_{\mathrm{coloop}} (M) = \begin{cases}
1_\KK \mbox{ if } M 
= U_{1,1},\\
0_\KK \mbox{ otherwise }.
\end{cases}
\eeqa

One can directly check that these maps are {\it infinitesimal characters} of the matroid 
Hopf algebra defined above. 

\medskip

One then has:

\begin{lemma}
\label{lema-help}
 Let $M=(E,\mathcal{I})$ be a matroid. One has 
\begin{equation}
\label{help}
\mbox{exp}_*\{a\delta_{\mathrm{coloop}}+b\delta_{\mathrm{loop}}\}(M) = a^{r(M)}b^{n(M)}.
\end{equation} 
\end{lemma}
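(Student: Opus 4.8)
The plan is to compute $\mbox{exp}_*\{a\delta_{\mathrm{coloop}}+b\delta_{\mathrm{loop}}\}(M)$ directly from the definition of $\mbox{exp}_\ast$ in \eqref{def-expstar} by iterating the coproduct \eqref{defc}. Write $\delta = a\delta_{\mathrm{coloop}}+b\delta_{\mathrm{loop}}$. First I would record that $\delta$ vanishes on the unit $\mathbf 1 = U_{0,0}$ and is supported only on the two one-element matroids $U_{1,1}$ and $U_{0,1}$, taking values $a$ and $b$ respectively; in particular $\delta$ is the ``non-trivial part'' of an infinitesimal character, so the $k$-fold convolution power $\delta^{\ast k}$ is supported on matroids whose ground set has exactly $k$ elements, and $\mbox{exp}_*(\delta)(M) = \sum_{k\ge 0}\frac{1}{k!}\delta^{\ast k}(M)$ is a finite sum with only the term $k = |E|$ contributing.

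Next I would unfold the iterated coproduct. By coassociativity, $\delta^{\ast k}(M) = \sum M(A_1)\otimes\cdots$ evaluated termwise; concretely, using \eqref{defc} repeatedly, $\delta^{\ast k}(M) = \sum \delta(M_1)\delta(M_2)\cdots\delta(M_k)$ where the sum runs over all chains $\emptyset = A_0 \subseteq A_1 \subseteq \cdots \subseteq A_k = E$ and $M_i$ is the minor $(M|A_i)/A_{i-1}$ of ground set $A_i \setminus A_{i-1}$. For a nonzero contribution every $M_i$ must be a one-element matroid, hence each $A_i \setminus A_{i-1}$ is a singleton $\{e_i\}$, so the chains correspond exactly to the $k! = |E|!$ orderings $e_1,\dots,e_k$ of $E$. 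This accounts for the $1/k!$ in the exponential: after dividing, $\mbox{exp}_*(\delta)(M)$ is the \emph{average} over all orderings of $E$ of the product $\prod_{i} \delta(M_i)$.

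Then I would evaluate a single ordered term. Fix an ordering $e_1,\dots,e_n$ of $E$ and set $A_i = \{e_1,\dots,e_i\}$; the relevant minor is $N_i := (M|A_i)/A_{i-1}$, a matroid on the single element $e_i$. By Lemma \ref{lm:res-del} (restriction is deletion of the complement) and Lemma \ref{lm:rankcontra} (rank under contraction), $r_{N_i}(\{e_i\}) = r_M(A_i) - r_M(A_{i-1})$, which is $0$ or $1$. Thus $N_i = U_{1,1}$ (a coloop) when $r_M$ jumps at step $i$, and $N_i = U_{0,1}$ (a loop) otherwise. Hence $\delta(N_i) = a$ in the first case and $b$ in the second, and along any ordering the number of ``jump'' steps is $\sum_i (r_M(A_i)-r_M(A_{i-1})) = r_M(E) = r(M)$, while the number of non-jump steps is $n - r(M) = n(M)$. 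Therefore every ordered term equals $a^{r(M)} b^{n(M)}$, independent of the ordering, and averaging over the $n!$ orderings gives $\mbox{exp}_*(\delta)(M) = a^{r(M)} b^{n(M)}$, as claimed. The only mild subtlety — the step I would treat most carefully — is the bookkeeping that identifies the iterated coproduct with sums over flags/orderings and the clean application of Lemmas \ref{lm:res-del} and \ref{lm:rankcontra} to see that the successive one-element minors are precisely loops or coloops according to whether the rank function increments; once that is in place the rank-telescoping argument is immediate.
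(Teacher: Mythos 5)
Your proof is correct and takes essentially the same route as the paper's: expand $\exp_*$ via \eqref{def-expstar}, note that only the $k=|E|$ term survives, and evaluate the iterated coproduct on one-element minors, which must be $U_{0,1}$ or $U_{1,1}$, counted by nullity and rank. Your write-up is in fact a more careful version of the paper's sketch --- the identification of the iterated coproduct with chains/orderings, the cancellation of $1/k!$ against the $|E|!$ orderings, and the rank-telescoping via Lemmas \ref{lm:res-del} and \ref{lm:rankcontra} are precisely the steps the paper leaves implicit.
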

\begin{proof}
Using the definition \eqref{def-expstar}, the lhs of the identity \eqref{help} above
writes:
\begin{equation}
 \left(\sum_{k=0}^\infty \frac{(a\delta_{\mathrm{coloop}}+b\delta_{\mathrm{loop}})^k}{k!}\right)(M).
\end{equation}
All the terms in the sum above vanish, except the one for whom 
$k$ is equal to $|E|$. Using the definition \eqref{def-conv} of the convolution product,
this term writes
\beqa
\frac{1}{k!}\left( \sum_{i=0}^k a^{k-i}b^{i}
\sum_{\substack{i_1,\dots,i_n \\ j_1,\dots, j_m \\ i_1+ \dots + i_n =k - i \\ j_1+ \dots + j_m = i}}
\delta_{\mathrm{coloop}}^{\otimes(i_1)}\otimes\delta_{\mathrm{loop}}^{\otimes(j_1)}
\otimes\dots\otimes \delta_{\mathrm{coloop}}^{\otimes(i_n)}
\otimes\delta_{\mathrm{loop}}^{\otimes(j_m)}\right) 
\left(\sum_{ (i)} M^{(1)}\otimes\dots\otimes M^{(k)}\right),
\eeqa
where we have used the notation $\Delta^{(k-1)}(M)=\sum_{ (i)} M^{(1)}\otimes\dots\otimes M^{(k)}$.
Using the definitions \eqref{def-dloop} and respectively \eqref{def-dtree} 
of the infinitesimal characters
$\delta_{\mathrm{loop}}$ and respectively $\delta_{\mathrm{coloop}}$, implies that
the submatroids $M^{(j)}$ ($j=1,\ldots,k$) are equal to $U_{0,1}$ or $U_{1,1}$.

Using the definition of the nullity and of the rank of a matroid concludes the proof.
\end{proof}


We now define the following map:

\beqa
\label{def-alpha}
\alpha(x,y,s,M) := \mbox{exp}_\ast s\{\delta_{\mathrm{coloop}}
+(y-1)\delta_{\mathrm{loop}}\}\ast \mbox{exp}_\ast s\{(x-1)\delta_{\mathrm{coloop}}+\delta_{\mathrm{loop}}\}
(M).
\eeqa

One then has:

\begin{proposition}
 The map \eqref{def-alpha} is a character.
\end{proposition}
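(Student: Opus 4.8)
The plan is to read $\alpha$ as a convolution product of two characters. Writing $\delta_1 = s\,\delta_{\mathrm{coloop}}+s(y-1)\,\delta_{\mathrm{loop}}$ and $\delta_2 = s(x-1)\,\delta_{\mathrm{coloop}}+s\,\delta_{\mathrm{loop}}$, definition \eqref{def-alpha} is nothing but $\alpha(x,y,s,-)=\mbox{exp}_\ast(\delta_1)\ast\mbox{exp}_\ast(\delta_2)$. So the statement reduces to two general facts: (a) the $\ast$-exponential of an infinitesimal character is a character; and (b) the $\ast$-convolution of two characters is a character. First I would note that $\delta_1$ and $\delta_2$ are themselves infinitesimal characters, since the defining identity $g(M_1\oplus M_2)=g(M_1)\epsilon(M_2)+\epsilon(M_1)g(M_2)$ is $\KK$-linear in $g$ and $\delta_{\mathrm{loop}},\delta_{\mathrm{coloop}}$ are infinitesimal characters.

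For (a), the idea is that the infinitesimal-character identity is exactly the equality $\delta\circ m=\delta\otimes\epsilon+\epsilon\otimes\delta$ inside the convolution algebra $\mathrm{Hom}(k(\widetilde{\mathcal M})\otimes k(\widetilde{\mathcal M}),\KK)$ of the tensor-product coalgebra. Since $k(\widetilde{\mathcal M})$ is a bialgebra (Proposition \ref{prop-cs}), its multiplication $m$ is a morphism of coalgebras, so precomposition with $m$ is a morphism of convolution algebras and hence commutes with $\mbox{exp}_\ast$ (the series involved are finite, by the assumed nilpotency of the reduced coproduct); on the other hand $\delta\otimes\epsilon$ and $\epsilon\otimes\delta$ commute for the tensor-product convolution, with exponentials $\mbox{exp}_\ast(\delta)\otimes\epsilon$ and $\epsilon\otimes\mbox{exp}_\ast(\delta)$ respectively. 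Combining these gives $\mbox{exp}_\ast(\delta)\circ m=\mbox{exp}_\ast(\delta)\otimes\mbox{exp}_\ast(\delta)$, i.e. multiplicativity, while $\mbox{exp}_\ast(\delta)(\mathbf{1})=1_\KK$ follows from $\delta(\mathbf{1})=0$ together with $\Delta(\mathbf{1})=\mathbf{1}\otimes\mathbf{1}$. For (b), I would compute in Sweedler notation: since $\Delta$ is an algebra morphism, $(f\ast g)(ab)=\sum f(a_{(1)}b_{(1)})\,g(a_{(2)}b_{(2)})=\sum f(a_{(1)})f(b_{(1)})g(a_{(2)})g(b_{(2)})$, which equals $(f\ast g)(a)\,(f\ast g)(b)$ because $\KK$ is commutative, and $(f\ast g)(\mathbf{1})=1_\KK$. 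Applying (a) to $\delta_1,\delta_2$ and then (b) finishes the argument.

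I expect the delicate point to be the first half of (a): one has to line up the bialgebra axiom, the identification of the two convolution algebras and the convergence of $\mbox{exp}_\ast$ in order to legitimately apply $\mbox{exp}_\ast$ to the infinitesimal-character identity. A more computational route avoids this entirely: by Lemma \ref{lema-help}, $\mbox{exp}_\ast(\delta_1)(N)=s^{|E_N|}(y-1)^{n(N)}$ and $\mbox{exp}_\ast(\delta_2)(N)=s^{|E_N|}(x-1)^{r(N)}$ (using $r(N)+n(N)=|E_N|$ for the ground set $E_N$ of $N$), so unwinding \eqref{def-conv} yields the closed form $\alpha(x,y,s,M)=s^{|E|}\sum_{A\subseteq E}(x-1)^{r(M/A)}(y-1)^{n(M|A)}$; multiplicativity under $\oplus$ then follows from the additivity of rank and nullity over direct sums together with the factorizations $(M_1\oplus M_2)|A=(M_1|A_1)\oplus(M_2|A_2)$ and $(M_1\oplus M_2)/A=(M_1/A_1)\oplus(M_2/A_2)$ for $A=A_1\sqcup A_2$, and $\alpha(x,y,s,\mathbf{1})=1_\KK$ is immediate.
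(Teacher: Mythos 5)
Your proof is correct and follows essentially the same route as the paper: linear combinations of infinitesimal characters are infinitesimal characters, $\mbox{exp}_\ast$ of an infinitesimal character is a character, and the convolution of two characters is a character. The paper simply cites these facts without proof (and also mentions the alternative ``direct check,'' which matches your closed-form computation via Lemma \ref{lema-help}), so your write-up just supplies details the paper leaves implicit.
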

\begin{proof}
The proof can be done by a direct check. On a more general 
basis, this is a consequence of the fact that 
$\delta_{\mathrm{loop}}$ and $\delta_{\mathrm{coloop}}$ are infinitesimal characters
and the space of infinitesimal characters is a vector space; 
thus 
$s\{\delta_{\mathrm{coloop}}
+(y-1)\delta_{\mathrm{loop}}\}$ and 
$s\{(x-1)\delta_{\mathrm{coloop}}+\delta_{\mathrm{loop}}\}$ 
are
infinitesimal characters.
Since, $\mbox{exp}_\ast (h)$ is a character when $h$ is an infinitesimal character
and since the convolution of two characters is a character,
one gets that $\alpha$ is a character.
\end{proof}

Let us define a map
\begin{equation}
\label{defphi}
\varphi_{a,b} (M) \longmapsto a^{r(M)}b^{n(M)}M \ .
\end{equation} 

\begin{lemma}
\label{lemamor}
The map $\varphi_{a,b}$ is a bialgebra automorphism.
\end{lemma}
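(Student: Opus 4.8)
The plan is to verify directly that $\varphi_{a,b}$ respects both the algebra and the coalgebra structures, and that it is invertible. First I would check that $\varphi_{a,b}$ is an algebra morphism: since the product is given by direct sum and both the rank function and the nullity function are additive under direct sums, i.e. $r(M_1\oplus M_2)=r(M_1)+r(M_2)$ and $n(M_1\oplus M_2)=n(M_1)+n(M_2)$, we get $\varphi_{a,b}(M_1\oplus M_2)=a^{r(M_1)+r(M_2)}b^{n(M_1)+n(M_2)}(M_1\oplus M_2)=\varphi_{a,b}(M_1)\cdot\varphi_{a,b}(M_2)$. It also fixes the unit $\mathbf{1}=U_{0,0}$ since $r(\mathbf{1})=n(\mathbf{1})=0$.

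Next I would check compatibility with the coproduct \eqref{defc}, namely that $\Delta\circ\varphi_{a,b}=(\varphi_{a,b}\otimes\varphi_{a,b})\circ\Delta$. Applying the left-hand side to $M=(E,\I)$ gives $a^{r(M)}b^{n(M)}\sum_{A\subseteq E}M|A\otimes M/A$. For the right-hand side, one gets $\sum_{A\subseteq E}a^{r(M|A)}b^{n(M|A)}a^{r(M/A)}b^{n(M/A)}\,M|A\otimes M/A$. So the claim reduces to the identity $r(M|A)+r(M/A)=r(M)$ and $n(M|A)+n(M/A)=n(M)$ for every $A\subseteq E$. The first follows from Lemma \ref{lm:res-del} (so that $r(M|A)=r_M(A)$) together with Lemma \ref{lm:rankcontra}, which gives $r_{M/A}(E-A)=r_M(E)-r_M(A)$; adding these yields $r_M(E)=r(M)$. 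The second identity then follows by combining the rank identity with additivity of cardinality, $|A|+|E-A|=|E|$, and the definition \eqref{defnul} of nullity. Compatibility with the counit $\epsilon$ is immediate since $\varphi_{a,b}$ only rescales and $\epsilon$ depends only on whether $E=\emptyset$.

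Finally, for the automorphism property I would exhibit the inverse. Provided $a$ and $b$ are invertible in the ground ring (which is the relevant setting, since in applications $a,b$ are specializations of formal variables or nonzero scalars), the map $\varphi_{a^{-1},b^{-1}}$ is a two-sided inverse: $\varphi_{a,b}\circ\varphi_{a^{-1},b^{-1}}(M)=a^{r(M)}b^{n(M)}a^{-r(M)}b^{-n(M)}M=M$, and similarly on the other side. Hence $\varphi_{a,b}$ is a bialgebra isomorphism from the matroid bialgebra to itself, i.e. a bialgebra automorphism.

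The main obstacle is the verification of the two additivity-over-the-coproduct identities $r(M|A)+r(M/A)=r(M)$ and $n(M|A)+n(M/A)=n(M)$; everything else is bookkeeping. These are standard matroid facts, and with Lemmas \ref{lm:res-del} and \ref{lm:rankcontra} already available in the paper the argument is short, but it is the one place where genuine matroid structure (rather than the formal Hopf-algebra setup) enters.
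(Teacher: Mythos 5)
Your proof is correct and follows essentially the same route as the paper: the heart of both arguments is the identity $r(M|_A)+r(M/A)=r(M)$ (and its nullity counterpart) obtained from Lemmas \ref{lm:res-del} and \ref{lm:rankcontra}, which makes $\varphi_{a,b}$ commute with the coproduct \eqref{defc}. You merely spell out details the paper leaves as a ``direct check'' (multiplicativity under direct sums, the counit, and the inverse $\varphi_{a^{-1},b^{-1}}$, with the sensible caveat that $a,b$ be invertible), so no further comment is needed.
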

\begin{proof}
One can directly check that the map $\varphi_{a,b}$ is an algebra automorphism.
Let us now check that this map is also a coalgebra automorphism. Using 
Lemma \ref{lm:res-del} and 
Lemma \ref{lm:rankcontra}, 
\beqa
\label{ranks}
r(M|_T) + r(M/T) = r(M).
\eeqa
Thus, using  the definitions of the map $\varphi_{a,b}$ of the matroid coproduct, one has:
\begin{equation}
 \Delta\circ \varphi_{a,b}(M) =
\sum_{T\subseteq E} (a^{r(M|_T)}b^{n(M|_T)}M|_T) \otimes (a^{r(M/_T)}b^{n(M/_T)}M/_T).
\end{equation}
Using again the definition of the map $\varphi_{a,b}$ leads to
\begin{equation}
 \Delta\circ \varphi_{a,b}(M) =(\varphi_{a,b}\otimes \varphi_{a,b})\circ \Delta (M),
\end{equation}
which concludes the proof.
\end{proof}

\section{Proof of the universality of the Tutte polynomial for matroids}
\renewcommand{\theequation}{\thesection.\arabic{equation}}
\setcounter{equation}{0}
\label{sec:proof}



Let $M=(E,\mathcal{I})$ be a matroid. 
One has:
\beqa
\label{pre-reiner}
\alpha(x,y,s,M) &=& 
\mbox{exp}_{\ast}
\left(s(\delta_{\mathrm{coloop}}+(y-1)\delta_{\mathrm{loop}})\right)\ast\mbox{exp}_{\ast}\left(s(-\delta_{\mathrm{coloop}}+\delta_{\mathrm{loop}})\right)
\nonumber\\
&\ast&\mbox{exp}_{\ast}\left(s(\delta_{\mathrm{coloop}}-\delta_{\mathrm{loop}})\right)\ast
\mbox{exp}_{\ast}\left(s((x-1)\delta_{\mathrm{coloop}}+\delta_{\mathrm{loop}})\right).
\eeqa 

\begin{proposition}
\label{alpha-tutte}
Let $M=(E,\mathcal{I})$ be a matroid. 
The character $\alpha$ is related to the Tutte polynomial of matroids by the 
following identity:
\begin{equation}
\alpha(x,y,s,M) = s^{|E|}T_M(x,y)\ .
\end{equation} 
\end{proposition}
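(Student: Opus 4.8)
The plan is to reduce the statement to the already-proven Lemma~\ref{lema-help} by exploiting the fact that $\alpha$ is a \emph{convolution product} of two exponentials of infinitesimal characters, together with the bialgebra automorphism $\varphi_{a,b}$ of Lemma~\ref{lemamor}. First I would observe that the two factors in \eqref{def-alpha} are, by Lemma~\ref{lema-help}, of the form $\exp_\ast s\{\delta_{\mathrm{coloop}}+(y-1)\delta_{\mathrm{loop}}\}(N)=s^{|E(N)|}(y-1)^{n(N)}$ and $\exp_\ast s\{(x-1)\delta_{\mathrm{coloop}}+\delta_{\mathrm{loop}}\}(N)=s^{|E(N)|}(x-1)^{r(N)}$, since $r+n=|E|$ on any matroid. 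Hence, expanding the convolution via \eqref{def-conv} and the coproduct \eqref{defc},
\begin{equation}
\alpha(x,y,s,M)=\sum_{A\subseteq E} s^{|A|}(y-1)^{n(M|A)}\, s^{|E|-|A|}(x-1)^{r(M/A)} = s^{|E|}\sum_{A\subseteq E}(y-1)^{n(M|A)}(x-1)^{r(M/A)}.
\end{equation}
So the whole content is the identity $\sum_{A\subseteq E}(x-1)^{r(M/A)}(y-1)^{n(M|A)}=T_M(x,y)$.

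Next I would identify the exponents with the ones appearing in the defining formula \eqref{def-tutte}. For the deletion/restriction $M|A$ one has $n(M|A)=|A|-r(M|A)=|A|-r_M(A)=n_M(A)$, which already matches the nullity exponent $n(A)$ in \eqref{def-tutte}. For the contraction, Lemma~\ref{lm:rankcontra} (with $T=A$, $X=E-A$) gives $r_{M/A}(E-A)=r_M(E)-r_M(A)$, i.e. $r(M/A)=r(E)-r(A)$, which is exactly the first exponent in \eqref{def-tutte}. Substituting these two evaluations termwise into the sum above turns it into $\sum_{A\subseteq E}(x-1)^{r(E)-r(A)}(y-1)^{n(A)}=T_M(x,y)$, and pulling out the overall $s^{|E|}$ finishes the argument.

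The only genuinely delicate point is the bookkeeping in the first step: one must be careful that Lemma~\ref{lema-help} is being applied to the correct tensor factors of $\Delta(M)=\sum_A M|A\otimes M/A$, and that the factor $s$ is distributed as $s^{|A|}\cdot s^{|E|-|A|}=s^{|E|}$ uniformly over all terms — this uses that $s$ scales the infinitesimal character linearly, so $\exp_\ast(s\delta)$ picks up $s$ to the power of the ground-set size of its argument, which is $|A|$ on the left and $|E\setminus A|=|E|-|A|$ on the right. Everything else is a direct substitution using Lemma~\ref{lm:rankcontra} and the definition \eqref{defnul} of nullity. Note that the reshuffled form \eqref{pre-reiner} is not needed for this particular proposition; it is the expression relevant to the convolution formula of~\cite{reiner}, and I would not invoke it here.
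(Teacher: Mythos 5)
Your proposal is correct and takes essentially the same route as the paper: expand the convolution via the coproduct and apply Lemma~\ref{lema-help} to each tensor factor, you merely spelling out the rank/nullity bookkeeping ($r(M/A)=r(E)-r(A)$ from Lemma~\ref{lm:rankcontra} and $n(M|A)=n(A)$) that the paper leaves implicit. One minor remark: your opening sentence announces the automorphism $\varphi_{a,b}$ of Lemma~\ref{lemamor}, but your argument never uses it --- and indeed it is not needed here.
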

\begin{proof}
 Using the definition \eqref{def-conv} of the convolution product in the definition 
\eqref{def-alpha} of the character $\alpha$, one has the following identity:
\begin{align}
\label{eq:dev2mat} 
\alpha(x,y,s,M) = \sum_{A \subseteq E} \mbox{exp}_*s\{\delta_{\mathrm{coloop}}+(y-1)\delta_{\mathrm{loop}}\}(M|_A)\ 
\mbox{exp}_*s\{(x-1)\delta_{\mathrm{coloop}}+\delta_{\mathrm{loop}}\}(M/A).
\end{align}
We can now apply Lemma \ref{lema-help} on each of the two terms in the rhs of equation 
\eqref{eq:dev2mat} above. This leads to the result.
\end{proof}

Using \eqref{reltdual} and the Proposition \ref{alpha-tutte}, one has:

\begin{corollary}
 One has:
\beqa
\alpha(x,y,s,M) = \alpha(y,x,s,M^\star).
\eeqa
\end{corollary}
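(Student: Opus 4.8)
The plan is to derive the identity directly from Proposition \ref{alpha-tutte} together with the duality relation \eqref{reltdual} for the Tutte polynomial, so essentially no new work is required; the only points to be careful about are matching the ground sets and the order of the variables.

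First I would apply Proposition \ref{alpha-tutte} to $M$ itself, obtaining $\alpha(x,y,s,M) = s^{|E|}T_M(x,y)$. Next I would observe that the dual matroid $M^\star$ is defined on the same ground set $E$ (its bases are the complements in $E$ of the bases of $M$), so that $M^\star$ again has a ground set of cardinality $|E|$. Hence applying Proposition \ref{alpha-tutte} a second time, now to the matroid $M^\star$ and with the variables exchanged, yields $\alpha(y,x,s,M^\star) = s^{|E|}T_{M^\star}(y,x)$.

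Finally I would invoke the classical identity \eqref{reltdual}, namely $T_M(x,y) = T_{M^\star}(y,x)$, to identify the two right-hand sides:
\beqa
\alpha(x,y,s,M) = s^{|E|}T_M(x,y) = s^{|E|}T_{M^\star}(y,x) = \alpha(y,x,s,M^\star),
\eeqa
which is the claimed equality.

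I do not expect any genuine obstacle here: the statement is a formal corollary, and the only thing one must not overlook is that the exponent $s^{|E|}$ is symmetric under duality precisely because $M$ and $M^\star$ share the same ground set, so the prefactors on both sides agree automatically. (One could alternatively give a self-contained proof by expanding $\alpha$ via the definition \eqref{def-alpha}, using that the coproduct $\Delta$ is intertwined with duality in the sense that $\psi$ exchanges $M|_A$ with $(M^\star)/ (E-A)$ and $M/A$ with $(M^\star)|_{E-A}$, together with the fact that $\delta_{\mathrm{loop}}$ and $\delta_{\mathrm{coloop}}$ are swapped by $\psi$ since $\psi(U_{0,1}) = U_{1,1}$; but routing through Proposition \ref{alpha-tutte} and \eqref{reltdual} is shorter.)
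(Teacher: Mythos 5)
Your proposal is correct and follows exactly the paper's route: the corollary is stated there as an immediate consequence of Proposition \ref{alpha-tutte} combined with the duality relation \eqref{reltdual}, which is precisely your chain $\alpha(x,y,s,M)=s^{|E|}T_M(x,y)=s^{|E|}T_{M^\star}(y,x)=\alpha(y,x,s,M^\star)$. Your added remark that $M$ and $M^\star$ share the same ground set, so the prefactor $s^{|E|}$ matches, is a sensible point of care that the paper leaves implicit.
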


This allows to give a different proof of a matroid Tutte polynomial
convolution identity, which was shown in \cite{reiner}.
One has:

\begin{corollary}
(Theorem $1$ of \cite{reiner})
 The Tutte polynomial satisfies
\begin{equation}
 T_M(x,y)=\sum_{A\subset E} T_{M|A}(0,y) T_{M/A}(x,0).
\end{equation}
\end{corollary}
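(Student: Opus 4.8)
The plan is to derive this convolution identity purely from the results already established, specifically from Proposition~\ref{alpha-tutte} together with the alternative expression \eqref{pre-reiner} for the character $\alpha$. The key observation is that \eqref{pre-reiner} rewrites $\alpha(x,y,s,M)$ as a convolution product of \emph{four} exponentials, grouped $2+2$ as the $\ast$-product of
$\mbox{exp}_\ast s\{\delta_{\mathrm{coloop}}+(y-1)\delta_{\mathrm{loop}}\}\ast\mbox{exp}_\ast s\{-\delta_{\mathrm{coloop}}+\delta_{\mathrm{loop}}\}$ with $\mbox{exp}_\ast s\{\delta_{\mathrm{coloop}}-\delta_{\mathrm{loop}}\}\ast\mbox{exp}_\ast s\{(x-1)\delta_{\mathrm{coloop}}+\delta_{\mathrm{loop}}\}$. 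Since infinitesimal characters form a vector space and $\mbox{exp}_\ast$ of an infinitesimal character is a character, each of these two grouped factors is itself a character, and by Proposition~\ref{alpha-tutte} the first factor evaluated on a matroid $N$ equals $s^{|E(N)|}T_N(x,0)$ after specializing, while the second equals $s^{|E(N)|}T_N(0,y)$ --- one must check which grouping gives which specialization by matching parameters against the definition \eqref{def-alpha}.

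Concretely, the first two exponentials combine to $\mbox{exp}_\ast s\{0\cdot\delta_{\mathrm{coloop}}+y\,\delta_{\mathrm{loop}}\}$ only if one were simply adding the infinitesimal characters, but that is \emph{not} how the convolution of two exponentials behaves (the group is non-abelian in general); instead one recognizes the pair as an instance of the $\alpha$-construction \eqref{def-alpha} with suitably shifted arguments. So the cleaner route is: first I would show, by comparing \eqref{def-alpha} with \eqref{pre-reiner}, that
$\mbox{exp}_\ast s\{\delta_{\mathrm{coloop}}+(y-1)\delta_{\mathrm{loop}}\}\ast\mbox{exp}_\ast s\{-\delta_{\mathrm{coloop}}+\delta_{\mathrm{loop}}\} = \alpha(1,y,s,-)$ and
$\mbox{exp}_\ast s\{\delta_{\mathrm{coloop}}-\delta_{\mathrm{loop}}\}\ast\mbox{exp}_\ast s\{(x-1)\delta_{\mathrm{coloop}}+\delta_{\mathrm{loop}}\} = \alpha(x,1,s,-)$, hence by Proposition~\ref{alpha-tutte} these evaluate to $s^{|E|}T_M(1,y)$ and $s^{|E|}T_M(x,1)$. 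But $T_M(1,y)$ and $T_M(x,1)$ are not quite $T_M(0,y)$ and $T_M(x,0)$, so I would instead split as $\alpha(x,y,s,M) = \alpha(0,y,s,-)\ast\alpha(x,0,s,-)(M)$ by choosing the intermediate grouping that produces the arguments $0$ and $0$; expanding this single convolution via \eqref{def-conv} and $\Delta(M)=\sum_{A\subseteq E}M|A\otimes M/A$ gives $\sum_{A\subseteq E}\alpha(0,y,s,M|A)\,\alpha(x,0,s,M/A)$.

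Then I would apply Proposition~\ref{alpha-tutte} termwise: $\alpha(0,y,s,M|A) = s^{|A|}T_{M|A}(0,y)$ and $\alpha(x,0,s,M/A) = s^{|E|-|A|}T_{M/A}(x,0)$, using $|E(M|A)|=|A|$ and $|E(M/A)|=|E|-|A|$. Multiplying and summing gives $\alpha(x,y,s,M) = s^{|E|}\sum_{A\subseteq E}T_{M|A}(0,y)\,T_{M/A}(x,0)$, and comparing with $\alpha(x,y,s,M)=s^{|E|}T_M(x,y)$ from Proposition~\ref{alpha-tutte} and cancelling $s^{|E|}$ yields the claimed identity. The main obstacle is the bookkeeping in the second paragraph: one must verify that \eqref{pre-reiner} really factors through the $\ast$-product of two copies of the $\alpha$-construction with the specific arguments $(0,y)$ and $(x,0)$ --- equivalently that $s\{\delta_{\mathrm{coloop}}+(y-1)\delta_{\mathrm{loop}}\}$, $s\{-\delta_{\mathrm{coloop}}+\delta_{\mathrm{loop}}\}$, $s\{\delta_{\mathrm{coloop}}-\delta_{\mathrm{loop}}\}$, $s\{(x-1)\delta_{\mathrm{coloop}}+\delta_{\mathrm{loop}}\}$ pair up correctly as the four infinitesimal characters appearing in $\alpha(0,y,s,-)$ and $\alpha(x,0,s,-)$ expanded the same way; this is a routine matching of coefficients of $\delta_{\mathrm{coloop}}$ and $\delta_{\mathrm{loop}}$, once one notes $\alpha(0,y,s,-)$ uses $\{\delta_{\mathrm{coloop}}+(y-1)\delta_{\mathrm{loop}}\}$ and $\{-\delta_{\mathrm{coloop}}+\delta_{\mathrm{loop}}\}$ while $\alpha(x,0,s,-)$ uses $\{\delta_{\mathrm{coloop}}-\delta_{\mathrm{loop}}\}$ and $\{(x-1)\delta_{\mathrm{coloop}}+\delta_{\mathrm{loop}}\}$, which is exactly the four-factor expression \eqref{pre-reiner}. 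Everything else is the coassociativity-driven expansion of a single convolution, which needs no new ideas.
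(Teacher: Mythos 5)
Your proposal is correct and fills in exactly the argument the paper intends: regroup the four factors of \eqref{pre-reiner} as $\alpha(0,y,s,\cdot)\ast\alpha(x,0,s,\cdot)$, expand that single convolution with the coproduct, apply Proposition \ref{alpha-tutte} termwise to $M|A$ and $M/A$, and compare with $\alpha(x,y,s,M)=s^{|E|}T_M(x,y)$ at $s=1$. The brief false start (matching the pairs to $\alpha(1,y,s,\cdot)$ and $\alpha(x,1,s,\cdot)$) is harmless since you correct it, and the final coefficient matching is the right one, so this is essentially the paper's own proof written out in detail.
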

\begin{proof}
Taking $s=1$, this is as a direct consequence of 
identity \eqref{pre-reiner}, 
and of Proposition \ref{alpha-tutte}. 
\end{proof}

Let us now define:
\beqa
[f,g]_\ast :=f\ast g - g\ast f.
\eeqa
Using the definition \eqref{def-alpha} of the Hopf algebra character $\alpha$, 
one can directly prove the following result:

\begin{proposition}
 \label{propalpha}
The character $\alpha$ is the solution of the differential equation:
 \begin{equation}
\label{diffeqalpha}
\frac{d\alpha}{ds} = x\alpha \ast \delta_{\mathrm{coloop}} + y\delta_{\mathrm{loop}}\ast\alpha + 
\left[\delta_{\mathrm{coloop}},\alpha\right]_\ast - \left[\delta_{\mathrm{loop}},\alpha\right]_\ast \ .
\end{equation}
\end{proposition}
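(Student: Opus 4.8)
The plan is to differentiate the four-factor expression \eqref{pre-reiner} for $\alpha$ with respect to $s$ using the Leibniz rule for the convolution product, then reorganize the resulting four terms into the form appearing in \eqref{diffeqalpha}. First I would observe that each factor is of the form $\mathrm{exp}_\ast(s\delta)$ for an infinitesimal character $\delta$, so that $\frac{d}{ds}\mathrm{exp}_\ast(s\delta) = \delta\ast\mathrm{exp}_\ast(s\delta) = \mathrm{exp}_\ast(s\delta)\ast\delta$; this commutation of a factor with its own exponential is what makes the computation tractable. Writing $\alpha = E_1\ast E_2\ast E_3\ast E_4$ with $E_1 = \mathrm{exp}_\ast(s(\delta_{\mathrm{coloop}}+(y-1)\delta_{\mathrm{loop}}))$, $E_2 = \mathrm{exp}_\ast(s(-\delta_{\mathrm{coloop}}+\delta_{\mathrm{loop}}))$, $E_3 = \mathrm{exp}_\ast(s(\delta_{\mathrm{coloop}}-\delta_{\mathrm{loop}}))$, $E_4 = \mathrm{exp}_\ast(s((x-1)\delta_{\mathrm{coloop}}+\delta_{\mathrm{loop}}))$, the Leibniz rule gives $\frac{d\alpha}{ds}$ as a sum of four terms, in the $j$-th of which the generator of $E_j$ is inserted adjacent to $E_j$.

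Next I would use associativity and commutativity of the convolution product (the matroid Hopf algebra is commutative and cocommutative, so $\ast$ is commutative) to bring each inserted generator either all the way to the left or all the way to the right, turning the four terms into left/right multiplications of $\alpha$ itself. Concretely, inserting the generator of $E_1$ on the left gives $(\delta_{\mathrm{coloop}}+(y-1)\delta_{\mathrm{loop}})\ast\alpha$; inserting the generator of $E_4$ on the right gives $\alpha\ast((x-1)\delta_{\mathrm{coloop}}+\delta_{\mathrm{loop}})$; and the middle two insertions, for $E_2$ and $E_3$, can be slid out to produce $\pm(-\delta_{\mathrm{coloop}}+\delta_{\mathrm{loop}})$ and $\pm(\delta_{\mathrm{coloop}}-\delta_{\mathrm{loop}})$ acting on $\alpha$ from one side — but since they sit in the interior they will produce genuine commutator terms $[\delta_{\mathrm{coloop}},\alpha]_\ast$ and $[\delta_{\mathrm{loop}},\alpha]_\ast$ rather than pure one-sided products. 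Collecting: the $\delta_{\mathrm{coloop}}$-coefficients from the left insertion, the right insertion, and the two interior insertions should combine to $x\,\alpha\ast\delta_{\mathrm{coloop}} + [\delta_{\mathrm{coloop}},\alpha]_\ast$, and similarly the $\delta_{\mathrm{loop}}$-coefficients should combine to $y\,\delta_{\mathrm{loop}}\ast\alpha - [\delta_{\mathrm{loop}},\alpha]_\ast$, matching \eqref{diffeqalpha}.

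The main obstacle — and the step requiring the most care — is the interior insertions. A generator inserted between $E_1$ and $E_2$ (or between $E_2$ and $E_3$, etc.) cannot simply be moved to an end without cost, because although $\ast$ is commutative, one must track which exponential factors lie to its left versus right; the cleaner route is to note that when $\delta$ is inserted at an interior position, one can write it as (moved to the far left) minus (the commutator correction accumulated while passing the intervening factors), and then use that the intervening product of exponentials is again $\alpha$-like. I would therefore organize the bookkeeping so that the four terms of the Leibniz expansion are grouped as: two ``boundary'' terms giving the $x\alpha\ast\delta_{\mathrm{coloop}}$ and $y\delta_{\mathrm{loop}}\ast\alpha$ contributions, and two ``interior'' terms whose sum telescopes — thanks to the sign pattern $+,-,+$ in the exponents of $E_1,E_2,E_3,E_4$ for $\delta_{\mathrm{coloop}}$ and the complementary pattern for $\delta_{\mathrm{loop}}$ — precisely into $[\delta_{\mathrm{coloop}},\alpha]_\ast - [\delta_{\mathrm{loop}},\alpha]_\ast$. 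Alternatively, and perhaps more transparently, one can avoid the four-factor form altogether: differentiate the two-factor definition \eqref{def-alpha} directly, $\frac{d\alpha}{ds} = (\delta_{\mathrm{coloop}}+(y-1)\delta_{\mathrm{loop}})\ast\alpha + \alpha\ast((x-1)\delta_{\mathrm{coloop}}+\delta_{\mathrm{loop}})$, and then rewrite $(\delta_{\mathrm{coloop}}+(y-1)\delta_{\mathrm{loop}})\ast\alpha = \alpha\ast\delta_{\mathrm{coloop}} + [\delta_{\mathrm{coloop}},\alpha]_\ast + (y-1)\delta_{\mathrm{loop}}\ast\alpha$ and $\alpha\ast((x-1)\delta_{\mathrm{coloop}}+\delta_{\mathrm{loop}}) = (x-1)\alpha\ast\delta_{\mathrm{coloop}} + \delta_{\mathrm{loop}}\ast\alpha - [\delta_{\mathrm{loop}},\alpha]_\ast$; summing these two lines and collecting the $\alpha\ast\delta_{\mathrm{coloop}}$ and $\delta_{\mathrm{loop}}\ast\alpha$ coefficients yields exactly \eqref{diffeqalpha}. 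I would present this second, shorter derivation as the proof, using the four-factor identity \eqref{pre-reiner} only as the motivating rearrangement.
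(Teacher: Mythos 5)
Your second, shorter derivation---differentiating the two-factor definition \eqref{def-alpha} via $\frac{d}{ds}\mbox{exp}_\ast(s\delta)=\delta\ast\mbox{exp}_\ast(s\delta)=\mbox{exp}_\ast(s\delta)\ast\delta$ and then trading the one-sided products for commutator corrections before collecting coefficients---is correct and is exactly the direct verification from \eqref{def-alpha} that the paper asserts without writing out the details. One caveat: your parenthetical claim that the matroid Hopf algebra is cocommutative, so that $\ast$ is commutative, is false (the coproduct $\sum_{A\subseteq E} M|A\otimes M/A$ is not cocommutative, which is precisely why the commutator terms in \eqref{diffeqalpha} are nontrivial), but the derivation you actually present as the proof never uses that claim, so only the motivating four-factor sketch needs that remark deleted.
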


It is the fact that the matroid Tutte polynomial is a solution of the differential equation 
\eqref{diffeqalpha} that will be used now to prove the 
universality of the matroid Tutte polynomial. In order to do that, we take a
four-variable matroid polynomial $Q_M(x,y,a,b)$ satisfying 
a multiplicative law and which has the following properties: 
\begin{itemize}
 \item if $e$ is a coloop, then
\begin{equation}
Q_M(x,y,a,b) = xQ_{M\backslash e}(x,y,a,b)\ ,
\end{equation}
\item  if $e$ is a loop, then
\begin{equation}
 Q_M(x,y,a,b) = yQ_{M/e}(x,y,a,b)
\end{equation}
\item if $e$ is a nonseparating point, then
\beqa
Q_M(x,y,a,b) = aQ_{M\backslash e}(x,y,a,b)+bQ_{M/e}(x,y,a,b).
\eeqa
\end{itemize}

\begin{remark}
 Note that, when one deals with the same problem in the case of graphs, 
a supplementary multiplicative condition for the case of one-point joint of two graphs 
({\it i. e.} identifying a vertex of the first graph and a vertex of the second graph 
  into a single vertex of the resulting graph) 
is required (see, for example, \cite{em} or \cite{sokal}).
\end{remark}

We now 
define the map:
\beqa
\label{defbeta}
\beta(x,y,a,b,s,M):=s^{|E|}Q_M(x,y,a,b).
\eeqa
One then directly check (using the definition \eqref{defbeta} above and 
the multiplicative property of the polynomial $Q$)  
that 
this map is again a matroid Hopf algebra character.

\begin{proposition}
\label{propbeta}
The character \eqref{defbeta} satisfies the following differential equation: 
\begin{equation}
\label{diffeqbeta}
\frac{d\beta}{ds} (M) = \left( x\beta \ast \delta_{\mathrm{coloop}} + y\delta_{\mathrm{loop}}\ast \beta + 
b [\delta_{\mathrm{coloop}},\beta]_\ast - a [\delta_{\mathrm{loop}},\beta]_\ast\right) (M).
\end{equation}
\end{proposition}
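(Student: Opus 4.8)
The plan is to mimic, step by step, the argument that established Proposition \ref{propalpha} for $\alpha$, since $\beta$ is built from $Q_M$ in exactly the same shape as $\alpha$ was built (up to the replacement of the coefficients $1,1$ in front of the commutator terms by $b,a$). First I would recall that $\beta(x,y,a,b,s,M) = s^{|E|}Q_M(x,y,a,b)$ is a character, so by the multiplicativity it suffices to verify the differential equation \eqref{diffeqbeta} on matroids with a single ground-set element, i.e.\ on $U_{0,1}$ and $U_{1,1}$, and then propagate to arbitrary $M$ using the coalgebra structure (the convolution products with $\delta_{\mathrm{coloop}},\delta_{\mathrm{loop}}$ pick out, in the coproduct $\Delta M = \sum_{A\subseteq E} M|A\otimes M/A$, exactly the terms where a distinguished element is removed). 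More precisely, I would expand both sides of \eqref{diffeqbeta} evaluated on a general $M$ using \eqref{def-conv}: the left side is $|E|\,s^{|E|-1}Q_M$, while the right side becomes a sum over $A\subseteq E$ with $|E\setminus A|=1$ (the only surviving terms, since $\delta_{\mathrm{loop}},\delta_{\mathrm{coloop}}$ vanish except on one-element matroids), each contributing $s^{|E|-1}$ times a value of $Q$ on a deletion or contraction, weighted by $x$, $y$, $b$ or $-a$ according to whether the removed element is a coloop, a loop, or a nonseparating point seen from the $\delta_{\mathrm{coloop}}$ side versus the $\delta_{\mathrm{loop}}$ side.

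The combinatorial heart of the verification is then the following elementwise identity: for a fixed $e\in E$, writing $M' = M\backslash e$ and $M'' = M/e$, the contribution of the pair $(A, E\setminus A) = (E\setminus e, \{e\})$ to $x\beta\ast\delta_{\mathrm{coloop}}$ is $x\,s^{|E|-1}Q_{M\backslash e}$ when $e$ is a coloop (and $0$ otherwise, since then $M/e$ is not $U_{1,1}$), while the pair $(\{e\}, E\setminus e)$ contributes to $y\,\delta_{\mathrm{loop}}\ast\beta$ the term $y\,s^{|E|-1}Q_{M/e}$ when $e$ is a loop. For an $e$ that is neither a loop nor a coloop — a nonseparating point — both $(E\setminus e,\{e\})$ and $(\{e\}, E\setminus e)$ survive in the commutator terms, giving $b\,s^{|E|-1}Q_{M\backslash e} + a\,s^{|E|-1}Q_{M/e}$ after the signs in $[\delta_{\mathrm{coloop}},\beta]_\ast$ and $-[\delta_{\mathrm{loop}},\beta]_\ast$ cancel correctly. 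Summing over all $e\in E$ and invoking the three defining relations of $Q_M$ (the coloop, loop, and nonseparating-point rules listed just before \eqref{defbeta}) shows that the right side of \eqref{diffeqbeta} equals $s^{|E|-1}\sum_{e\in E}Q_M = |E|\,s^{|E|-1}Q_M$, which is the left side.

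The main obstacle I anticipate is bookkeeping the signs and the loop/coloop asymmetry in the two commutators: one must check that $[\delta_{\mathrm{coloop}},\beta]_\ast$ contributes the deletion term with coefficient $+b$ and $-[\delta_{\mathrm{loop}},\beta]_\ast$ the contraction term with coefficient $+a$, and that these are precisely the terms that are \emph{not} already accounted for by $x\beta\ast\delta_{\mathrm{coloop}}$ and $y\delta_{\mathrm{loop}}\ast\beta$ (which only fire on coloops and loops respectively). In other words, one has to confirm there is no double counting on a coloop/loop and no omission on a nonseparating point; this is exactly the same cancellation that makes Proposition \ref{propalpha} work, with $Q$ in place of $T_M$ and $(a,b)$ in place of $(1,1)$. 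Once this is checked on the generators, the general case follows since both sides of \eqref{diffeqbeta} are, by construction, determined by their values on one-element matroids together with the character (multiplicativity) property and the coproduct formula \eqref{defc} — equivalently, one may simply differentiate the closed form $\beta = s^{|E|}Q_M$ directly and recognize the right side via \eqref{def-conv}, with the elementwise identity above as the only nontrivial input.
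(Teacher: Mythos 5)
Your overall strategy is the paper's own: expand the right-hand side of \eqref{diffeqbeta} with \eqref{def-conv}, observe that $\delta_{\mathrm{loop}}$ and $\delta_{\mathrm{coloop}}$ kill every term of the coproduct except those with $|A|=1$ or $|E\setminus A|=1$, split according to whether the distinguished element $e$ is a coloop, a loop or a nonseparating point, apply the three defining relations of $Q$, and sum over $e$ to get $|E|\,s^{|E|-1}Q_M=\frac{d\beta}{ds}(M)$. However, your ``combinatorial heart'' has the roles of $a$ and $b$ interchanged, and with that assignment the computation does not close. Track the surviving terms for a nonseparating $e$: in $b[\delta_{\mathrm{coloop}},\beta]_\ast$ only the piece $+b\,\delta_{\mathrm{coloop}}\ast\beta$ fires (because $M|_{\{e\}}=U_{1,1}$ exactly when $e$ is not a loop, whereas $M/(E\setminus\{e\})=U_{1,1}$ only when $e$ is a coloop), contributing $b\,s^{|E|-1}Q_{M/e}$, i.e.\ $b$ on the \emph{contraction}; in $-a[\delta_{\mathrm{loop}},\beta]_\ast$ only the piece $+a\,\beta\ast\delta_{\mathrm{loop}}$ fires (because $M/(E\setminus\{e\})=U_{0,1}$ exactly when $e$ is not a coloop), contributing $a\,s^{|E|-1}Q_{M\backslash e}$, i.e.\ $a$ on the \emph{deletion}. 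You assert the opposite pairing ($b$ with $Q_{M\backslash e}$, $a$ with $Q_{M/e}$, and later ``$[\delta_{\mathrm{coloop}},\beta]_\ast$ contributes the deletion term with coefficient $+b$''); since the nonseparating-point rule reads $Q_M=aQ_{M\backslash e}+bQ_{M/e}$, your version yields $bQ_{M\backslash e}+aQ_{M/e}\neq Q_M$ in general, so the sum over $e$ would not reproduce $|E|\,s^{|E|-1}Q_M$ — as written you would be verifying a different (and false, for $a\neq b$) equation with $a$ and $b$ swapped in the commutator terms.

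Two smaller points. First, on a coloop the surviving contributions are $(x-b)\beta(M\backslash e)+b\beta(M/e)$ (and on a loop $(y-a)\beta(M/e)+a\beta(M\backslash e)$); you need Lemma \ref{lm:coloop} (and its loop analogue $M\backslash e=M/e$) to merge the extra $\pm b$ (resp.\ $\pm a$) pieces before the coloop/loop rules for $Q$ apply — this is precisely the ``no double counting'' you gesture at but do not carry out. Second, your opening claim that it suffices to check the equation on $U_{0,1}$ and $U_{1,1}$ and then propagate by multiplicativity is unjustified: the matroid Hopf algebra is not generated by one-element matroids, and $\beta(M)=s^{|E|}Q_M$ on an indecomposable $M$ is not determined by its values on them. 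Fortunately your operative argument is the direct expansion on a general $M$, which is the paper's route and works once the $a$/$b$ bookkeeping above is corrected.
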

\begin{proof}
Applying the definition \eqref{def-conv} of the convolution product,
the rhs of equation \eqref{diffeqbeta} above writes
 \begin{align}
\label{eq:betamat}
& = (x-b)\sum_{A\subseteq E} \beta(M|_A) \delta_{\mathrm{coloop}}(M/_A) + (y-a)\sum_{A\subseteq E} \delta_{\mathrm{loop}}(M|_A)\beta(M/_A)\notag\\
& + b \sum_{A\subseteq E} \delta_{\mathrm{coloop}}(M|_A)\beta(M/_A) + a \sum_{A\subseteq E} \beta(M|_A) \delta_{\mathrm{loop}}(M/_A).
\end{align}
Using the definitions \eqref{def-dloop} and respectively 
\eqref{def-dtree} of the infinitesimal characters
$\delta_{\mathrm{loop}}$ and respectively $\delta_{\mathrm{coloop}}$, constraints the sums on the subsets $A$ above. 
The rhs of \eqref{diffeqbeta}
becomes:
\beqa
& (x-b)\sum_{A, M/_A = U_{1,1}} \beta(M|_A) + (y-a)\sum_{A, M|_A= U_{0,1}} \beta(M/_A)\notag\\
& + b \sum_{A, M|_A = U_{1,1}} \beta(M/_A) + a \sum_{A, M/_A = U_{0,1}}\beta(M|_A)
\eeqa
We now apply the definition of the Hopf algebra character $\beta$; one obtains:
\beqa
\label{int1}
&s^{|E|-1}[(x-b)\sum_{A, M/_A = U_{1,1}} Q(x,y,a,b,M|_A) + 
(y-a)\sum_{A, M|_A= U_{0,1}} Q(x,y,a,b,M/_A)\notag\\
& + b \sum_{A, M|_A = U_{1,1}} Q(x,y,a,b,M/_A) + a \sum_{A, M/_A = U_{0,1}}Q(x,y,a,b,M|_A)].
\eeqa
We can now directly analyze the four particular cases $M/_A=U_{1,1}$, $M/_A=U_{0,1}$, 
$M|_A=U_{1,1}$ and $M|_A=U_{0,1}$:
\begin{itemize}
 \item 
If 
 $M/_A=U_{1,1}$, we can denote the ground set of $M/_A$ by $\{ e \}$. Note that $e$ is a coloop.
From the Lemma \ref{lm:res-del}, one has $M|_A = M\backslash_{E-A}=M\backslash e$.
One then has 
$Q(x,y,a,b,M) = xQ(x,y,a,b,M|_A)$.

\item
If $M|_A= U_{0,1}$, then $A=\{e\}$ and $e$ is a loop of $M$. Thus, one has 
$Q(x,y,a,b,M) = yQ(x,y,a,b,M/_A)$

\item If $M|_A = U_{1,1}$, then $A=\{e\}$. 
One has to distinguish between two subcases:
\begin{itemize}
\item $e$ is a coloop of $M$. Then, by Lemma \ref{lm:coloop}, 
$M/e = M\backslash e$. Thus, one has $Q(x,y,a,b,M) = xQ(x,y,a,b,M|_A)$.
\item $e$ is a nonseparating point of $M$. 
\end{itemize}

\item If $M/_A = U_{0,1}$, one can denote the ground set of $M/_A$ by 
$ \{e\}$. 
There are again two subcases to be considered:
\begin{itemize}
\item $e$ is a loop of $M$, one has that $M|_A = M\backslash_{(E-A)} = M\backslash_{\{e\}} = M/e$. 
Then one has $Q(x,y,a,b,M) = yQ(x,y,a,b,M|_A)$.
\item $e$ is a nonseparating point of $M$, then one has $M|_A = M\backslash_{(E-A)} = M\backslash_{\{e\}}$
\end{itemize}
\end{itemize}
We now insert all of this in equation \eqref{int1}; this leads to three types of sums over 
some element $e$ of the ground set $E$, $e$ being a loop, a coloop or a 
nonseparating point:
\begin{align}
s^{|E|-1}[\sum_{e \in E: e \mbox{\tiny{ is a coloop}}} Q(x,y,a,b,M) + \sum_{e\in E: e \mbox{\tiny{ is a loop}}} Q(x,y,a,b,M) + \sum_{e \in E: e \mbox{\tiny{ is a regular element}}}Q(x,y,a,b,M)] \notag\\
\end{align}
This rewrites as
\begin{align}
|E|s^{|E|-1}Q(x,y,a,b,M) = \frac{d\beta}{ds}(M),
\end{align}
which completes the proof.
\end{proof}

We can now state the main result of this paper, the recipe theorem
specifying how to recover the matroid polynomial $Q$ as 
an evaluation of the Tutte polynomial $T_M$:

\begin{theorem}
One has:
\begin{equation}
 \label{eqrecipe}
Q(x,y,a,b,M) = a^{n(M)}b^{r(M)}T_M(\frac{x}{b},\frac{y}{a}).
\end{equation}
\end{theorem}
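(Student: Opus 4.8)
The plan is to realize $\beta$ as a twist of the character $\alpha(\tfrac{x}{b},\tfrac{y}{a},s,-)$ by the bialgebra automorphism $\varphi_{b,a}$ of Lemma~\ref{lemamor}, and then to conclude by uniqueness of the solution of the flow equation. Concretely, I would introduce
\[
\gamma(x,y,a,b,s,M):=\alpha\!\left(\tfrac{x}{b},\tfrac{y}{a},s,\varphi_{b,a}(M)\right),
\]
which is again a character, being the composition of the character $\alpha(\tfrac{x}{b},\tfrac{y}{a},s,-)$ with the bialgebra automorphism $\varphi_{b,a}$. Since $\alpha$ is linear and $\varphi_{b,a}(M)=b^{r(M)}a^{n(M)}M$ by \eqref{defphi}, Proposition~\ref{alpha-tutte} gives immediately
\[
\gamma(x,y,a,b,s,M)=b^{r(M)}a^{n(M)}\,s^{|E|}\,T_M\!\left(\tfrac{x}{b},\tfrac{y}{a}\right).
\]
Comparing with \eqref{defbeta}, it therefore suffices to prove $\beta=\gamma$; evaluating both sides at $s=1$ then yields exactly \eqref{eqrecipe}.

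To compare $\beta$ and $\gamma$ I would first record two elementary facts. First, for any linear maps $f,g$ and any coalgebra morphism $\phi$, the definition \eqref{def-conv} of the convolution product together with $\Delta\circ\phi=(\phi\otimes\phi)\circ\Delta$ gives $(f\ast g)\circ\phi=(f\circ\phi)\ast(g\circ\phi)$. Second, from the definitions \eqref{def-dloop}, \eqref{def-dtree}, \eqref{defphi} and the values $r(U_{1,1})=1$, $n(U_{1,1})=0$, $r(U_{0,1})=0$, $n(U_{0,1})=1$, one gets $\delta_{\mathrm{coloop}}\circ\varphi_{b,a}=b\,\delta_{\mathrm{coloop}}$ and $\delta_{\mathrm{loop}}\circ\varphi_{b,a}=a\,\delta_{\mathrm{loop}}$.

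Now differentiating $\gamma$ in $s$ (the automorphism $\varphi_{b,a}$ does not depend on $s$), Proposition~\ref{propalpha} applied with $x\mapsto\tfrac{x}{b}$, $y\mapsto\tfrac{y}{a}$, followed by precomposition with $\varphi_{b,a}$ and the two facts above, yields
\begin{align*}
\frac{d\gamma}{ds}&=\tfrac{x}{b}\,\gamma\ast(b\,\delta_{\mathrm{coloop}})+\tfrac{y}{a}\,(a\,\delta_{\mathrm{loop}})\ast\gamma+[b\,\delta_{\mathrm{coloop}},\gamma]_\ast-[a\,\delta_{\mathrm{loop}},\gamma]_\ast\\
&=x\,\gamma\ast\delta_{\mathrm{coloop}}+y\,\delta_{\mathrm{loop}}\ast\gamma+b\,[\delta_{\mathrm{coloop}},\gamma]_\ast-a\,[\delta_{\mathrm{loop}},\gamma]_\ast,
\end{align*}
which is precisely the differential equation \eqref{diffeqbeta} satisfied by $\beta$ (Proposition~\ref{propbeta}). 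Moreover $\beta$ and $\gamma$ have the same value at $s=0$: from \eqref{defbeta}, $\beta(\,\cdot\,,0,M)=0^{|E|}Q_M=\epsilon(M)$, while $\gamma(\,\cdot\,,0,M)=\epsilon(\varphi_{b,a}(M))=\epsilon(M)$ since $\varphi_{b,a}$ is a coalgebra morphism. Finally I would invoke uniqueness: when the right-hand side of \eqref{diffeqbeta} is expanded using \eqref{def-conv}, each factor $\delta_{\mathrm{loop}}$ or $\delta_{\mathrm{coloop}}$ forces the corresponding tensor leg to be a one-element matroid, so $\tfrac{d}{ds}$ of the value on a matroid $M$ with $|E|=n$ is a linear combination of the values on minors of $M$ of cardinality $n-1$, with no occurrence of the value on $M$ itself. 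Hence an induction on $|E|$ (base case $\beta(\mathbf{1})=1=\gamma(\mathbf{1})$; inductive step: the right-hand sides for $\beta(M)$ and $\gamma(M)$ agree as functions of $s$ by the inductive hypothesis, and the values at $s=0$ agree, so $\beta(M)=\gamma(M)$ after integrating) gives $\beta=\gamma$. Setting $s=1$ produces $Q_M(x,y,a,b)=\gamma(x,y,a,b,1,M)=a^{n(M)}b^{r(M)}T_M(\tfrac{x}{b},\tfrac{y}{a})$.

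The step I expect to be the main obstacle is making the uniqueness argument precise: one must argue that the flow equation \eqref{diffeqbeta} is a triangular recursion with respect to the cardinality of the ground set, so that prescribing the value at $s=0$ pins down a unique (polynomial in $s$) solution. Everything else is bookkeeping with the automorphism $\varphi_{b,a}$ and keeping track of where the scalars $a$ and $b$ land.
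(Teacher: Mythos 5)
Your proposal is correct and follows essentially the same route as the paper: the paper's proof likewise transports the flow equation \eqref{diffeqbeta} into \eqref{diffeqalpha} with parameters $x/b$, $y/a$ via the automorphism $\varphi$ of Lemma \ref{lemamor} and then identifies the solution with $s^{|E|}T_M$ through Proposition \ref{alpha-tutte}. The only difference is that you make explicit (via the identities $\delta_{\mathrm{coloop}}\circ\varphi_{b,a}=b\,\delta_{\mathrm{coloop}}$, $\delta_{\mathrm{loop}}\circ\varphi_{b,a}=a\,\delta_{\mathrm{loop}}$, the matching initial condition at $s=0$, and the triangular induction on $|E|$) the uniqueness argument that the paper leaves implicit, which is a welcome completion rather than a different method.
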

\begin{proof}
 The proof is a direct consequence of Propositions \ref{alpha-tutte}, \ref{propalpha} and 
\ref{propbeta} and of Lemma \ref{lemamor}. This comes from the fact that 
one can apply the automorphism $\phi$ defined in \eqref{defphi}
to the differential equation 
\eqref{diffeqbeta}. One then obtains the differential equation 
\eqref{diffeqalpha} with modified parameters $x/b$ and $y/a$. 
Finally, the solution of this differential equation is (trivially) related to 
the matroid Tutte polynomial $T_M$ (see Proposition \ref{alpha-tutte})
and this concludes the proof. 
\end{proof}

\section{Conclusions}

We have thus proved in this paper the universality of the matroid Tutte polynomial
using differential equations of the same type as the Polchinski flow equation used 
in renormalization proofs in QFT. 
This analogy 
comes from the fact 
we differentiate with respect to two distinct type of edges of the graphs 
(see section \ref{sec:RG}).
The role of these two types of graph edges is played in the matroid case studied here by the
loop and the coloop type of elements in the matroid ground set.

As already announced in the Introduction, the matroid proofs given in this paper allow to 
prove the corresponding results for graphs. These graphs results were already conjectured in 
\cite{km}.

\bigskip

Let us end this paper by indicating as a possible direction for future work the 
investigation of the existence of a polynomial realization of matroid Hopf algebras.
This objective appears as particularly interesting 
because polynomial realizations of Hopf algebras 
substantially simplify the coproduct coassociativity check
(see, for example, 
the online version of the talk \cite{talk-jyt}). 
Such an example of polynomial realizations for the 
Hopf algebra of trees 
\cite{ck-0}
 (amongst other combinatorial 
Hopf algebras) was given in \cite{pol-real}.
Let us also mention that the task of finding polynomial realizations 
for matroid Hopf algebras appears to us to be close to the one of 
finding polynomial realizations 
for graph Hopf algebras, since both these algebraic combinatorial 
structures are based on the selection/contraction 
rule stated in the Introduction.

\bibliographystyle{abbrvnat}
\bibliography{FPSAC-DKHNT-final}
\label{sec:biblio}

\end{document}